\newcommand{\Mod}[1]{\ (\text{mod}\ #1)}
\renewcommand{\Mod}[1]{{\ifmmode\text{\rm\ (mod~$#1$)}\else\discretionary{}{}{\hbox{ }}\rm(mod~$#1$)\fi}}
\newcommand{\eps}{\varepsilon}
\newcommand{\E}{\mathbb{E}}
\newtheorem{theorem}{Theorem}
\newtheorem{thm}{Theorem}[subsection]% theorem counter resets every \subsection
\newtheorem{lemma}[theorem]{Lemma}
\newtheorem{conjecture}[theorem]{Conjecture}
\theoremstyle{definition} % changes the formatting of the environments defined below
\numberwithin{theorem}{section}
\title{On the Tur\'an number of the $G_{3\times 3}$ in linear hypergraphs}
 \author{J\'{o}zsef Solymosi}
\address{Department of Mathematics \\ University of British Columbia \\ Vancouver, BC, Canada \\ and Obuda University, Budapest, Hungary}
\email{solymosi@math.ubc.ca}
\begin{document}

\begin{abstract}
    We show a construction for dense 3-uniform linear hypergraphs without $3\times 3$ grids, improving the lower bound on its Tur\'an number.
    We also list some related problems.
\end{abstract}

\maketitle

\section{Introduction}
Finding the Tur\'an number of hypergraphs is challenging. The typical question asks for the maximum number of edges that can be avoided in a given hypergraph.
There are a few examples only when sharp bounds are known. Such questions are even more challenging in an important subfamily called {\em linear hypergraphs}.
In an $r$-uniform linear hypergraph, every hyperedge has $r$ vertices, and any pair of edges have at most one common vertex. 
An $r$-uniform linear hypergraph on $r^2$ vertices is called an $r$ by $r$ grid if it is isomorphic to a pattern of $r$ horizontal and $r$ vertical lines. 

Answering a question by F\"uredi and Ruszink\'o \cite{Furedi2013} about hypergraphs avoiding $3\times 3$ grids (denoted by $G_{3\times 3}$), Gishboliner and Shapira gave a construction for dense linear 3-uniform hypergraphs not containing $G_{3\times 3}$. 

\begin{figure}[H]
\centering
\includegraphics[scale=.3]{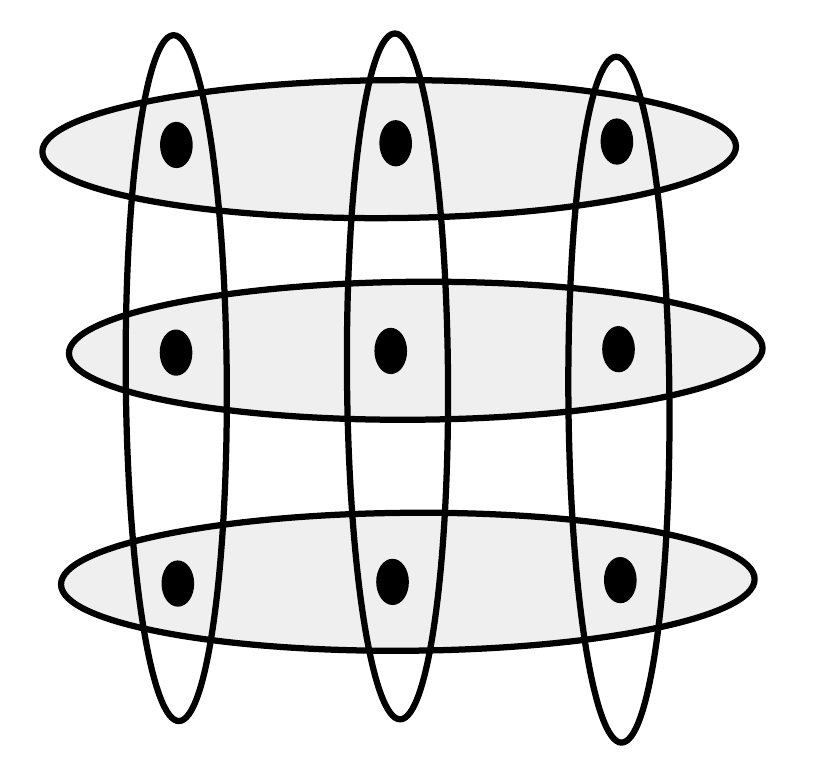}
\caption{A $G_{3\times 3}$ hypergraph on nine vertices with six edges}
\label{fig:tictac}
\end{figure}

Extremal problems about $G_{3\times 3}$ are important due to the connection to the Brown-Erd\H{o}s-S\'os conjecture, coding theory and geometric application (see examples in \cite{Furedi2013} \cite{Gishboliner2022} and \cite{Solymosi2024}). 

\begin{theorem}[Gishboliner and Shapira \cite{Gishboliner2022}]
For infinitely many $n$, there exists a linear $G_{3\times 3}$-free 3-uniform hypergraph with $n$ vertices
and $( \frac{1}{16} - o(1))n^2$ edges.
\end{theorem}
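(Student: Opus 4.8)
The plan is to give an explicit \emph{algebraic} construction and then reduce the property of containing a $G_{3\times 3}$ to an elementary additive condition that can be engineered away. Concretely, I would take $n=3m$ and build a tripartite $3$-uniform hypergraph from a Latin square: let the vertex classes be three disjoint copies $R,C,S$ of $\Z_m$ (``rows'', ``columns'', ``symbols''), and for a chosen set $W\subseteq \Z_m\times\Z_m$ of \emph{cells} include an edge $\{\,r_i,\,c_j,\,s_{i+j}\,\}$ for each $(i,j)\in W$. Any such hypergraph is automatically linear: two distinct cells agree in at most one of their row/column/symbol coordinates, so the corresponding edges meet in at most one vertex. Taking $W=\Z_m\times\Z_m$ already gives $m^2=n^2/9$ edges, so raw density is not the issue; the entire difficulty is to keep $W$ large while destroying every grid.

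Second, I would translate ``$H$ contains $G_{3\times3}$'' into a statement about $W$. Each of the six grid-edges is a single cell, and the $3\times 3$ pattern of pairwise intersections forces, after relabelling, a $3\times3$ Latin square of intersection \emph{types} on $\{\mathrm{row},\mathrm{col},\mathrm{symbol}\}$; up to the symmetries permuting the three roles and swapping the two grid-directions there are essentially two such type-squares (a cyclic one and its transpose). A short case analysis then shows that a grid is present exactly when there are distinct rows $a,b,c$ and distinct columns $x,y,z$ for which the six cells $(a,x),(b,y),(c,z)$ and $(a,z),(b,x),(c,y)$ all lie in $W$ and satisfy
\[
a+x=c+y,\qquad b+y=a+z,\qquad c+z=b+x,
\]
together with the mirror system arising from the transposed type-square. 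These three relations are dependent (their sum is an identity), so they cut out a two-parameter family: choosing $a,b,c$ and $x$ freely determines $y=a+x-c$ and $z=b+x-c$, giving $\Theta(m^4)$ potential grids.

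Third, since the addition table — indeed any complete Latin square, for $m$ large — contains such configurations, one cannot keep all $m^2$ cells and must restrict to a grid-free $W$. The target is $|W|=\bigl(\tfrac1{16}-o(1)\bigr)n^2=\tfrac{9}{16}m^2$, i.e.\ a $\tfrac9{16}$-proportion of the $m^2$ cells selected so that neither six-cell pattern above ever survives; pinning the constant $\tfrac1{16}$ is exactly the problem of optimizing this selector. Once $W$ is chosen, linearity and the edge count $|W|=\tfrac{9}{16}m^2=\tfrac1{16}n^2$ are immediate, and verifying $G_{3\times3}$-freeness reduces to checking that the bounded linear systems above have no admissible solution with all six cells retained — a finite, mechanical check.

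The hard part will be this third step. A purely random choice provably fails: if each cell is kept independently with probability $p$, the expected number of surviving grids is $\Theta(m^4p^6)$, and deleting one cell per grid leaves $\gtrsim pm^2-\Theta(m^4p^6)$ edges, which is maximized at $p\sim m^{-2/5}$ and yields only $\sim m^{8/5}=o(m^2)$ edges. One therefore needs a \emph{structured} $W$ — for instance defined by restricting rows and columns to carefully chosen residue sets, or by a low-complexity algebraic predicate — so that both additive systems become unsatisfiable inside $W$ while the density stays at $\tfrac9{16}$. I expect designing this selector, and matching the extremal density exactly, to be the main obstacle; everything else — linearity, the edge count, and the reduction to the two additive systems — is routine.
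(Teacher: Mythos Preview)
Your proposal is a plan, not a proof: the decisive third step --- producing a selector $W\subseteq\Z_m^2$ of density $\tfrac{9}{16}$ in which neither six-cell additive pattern survives --- is left entirely open, and you say as much. Your own random-deletion calculation shows this is not a routine gap: it falls short by a polynomial factor, so a genuinely new idea is required, and nothing in the write-up supplies one or even argues that such a $W$ exists at that density. As it stands, everything you have written is consistent with the Latin-square framework topping out well below $n^2/16$; the constant $\tfrac{9}{16}$ appears only as a target obtained by back-solving from the statement, not as a consequence of any construction.

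The paper bypasses the selector problem via geometry. Work in $AG(2,q)$; take the vertex set to be the union of two parabolas $V_1=\{(x,x^2):x\in\mathbb{F}_q\}$ and $V_2=\{(x,x^2+1):x\in\mathbb{F}_q\}$, and let the edges be the collinear triples consisting of two points on $V_1$ and one on $V_2$ (that is, secants of $V_1$ that meet $V_2$). A short quadratic-residue count gives $\approx q^2/4=n^2/16$ edges on $n=2q$ vertices. For $G_{3\times3}$-freeness, observe that in any copy six vertices would lie on $V_1$ and three on $V_2$; the six $V_1$-points form a hexagon inscribed in the conic $V_1$ whose six sides are exactly the six grid edges, and the three $V_2$-points are precisely the intersections of opposite sides of that hexagon. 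Pascal's theorem then forces those three $V_2$-points to be collinear --- impossible, since a conic contains no three collinear points. There is no selector to design; the conic does both jobs at once.
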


F\"uredi and Ruszink\'o conjectured that there are arbitrarily large Steiner triple systems avoiding $r\times r$ grids for any $r\ge 3$. 
They gave constructions for $r>3$, in which linear hypergraphs close to maximal density avoid $ r \times r$ grids \cite{Furedi2013}.
We are unsure about their conjecture (particularly the $r=3$ case), but at least we improve the Gishboliner-Sharipa bound.

\section{The new bound}

As in many extremal constructions, we are using objects in finite geometries. 

\begin{thm}\label{main}
For infinitely many $n$, there exists a linear $G_{3\times 3}$-free 3-uniform hypergraph with $n$ vertices
and $\left( \frac{1}{12} - o(1)\right)n^2$ edges.
\end{thm}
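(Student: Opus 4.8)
The plan is to produce the hypergraph from a finite-geometry construction over $\mathbb{F}_q$, so that letting $q$ range over prime powers yields the required infinite family of admissible $n$. As a sanity check on the target, recall that a $3$-uniform linear hypergraph on $n$ vertices has at most $\binom{n}{2}/\binom{3}{2} = (1/6-o(1))n^2$ edges, since each edge consumes three pairs and every pair lies in at most one edge; thus $\tfrac{1}{12}n^2$ is exactly half of the maximal (Steiner) density, and one should think of the construction as an efficiently chosen ``half'' of a maximal linear system. A convenient reformulation I would keep in mind: replace each edge $\{x,y,z\}$ by the triangle $xyz$ to obtain an auxiliary graph $\Gamma$; linearity makes this an edge-disjoint triangle decomposition, so $|E(\Gamma)| = 3|E(H)| = (1/4-o(1))n^2$, the Mantel density, while a $G_{3\times 3}$ becomes six triangles meeting in the grid pattern. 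I would run both the construction and the grid-freeness check in whichever of these two languages is cleaner.

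First I would pin down the construction and dispatch the two easy structural properties. Three-uniformity is built in, and linearity --- that any two edges meet in at most one vertex --- should follow immediately from the algebraic definition, since two points (or two of whatever objects index the edges) determine at most one edge. Next I would count edges: the raw count will be a clean algebraic quantity, and the only work is to subtract the degenerate configurations (coincident or collinear points, repeated coordinates, and so on) and check that they contribute a lower-order term, yielding $(1/12 - o(1))n^2$.

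The heart of the proof is grid-freeness. A $G_{3\times 3}$ is precisely two triples of pairwise-disjoint edges $R_1,R_2,R_3$ and $C_1,C_2,C_3$ that cross-intersect, with $R_i\cap C_j$ a single vertex $v_{ij}$ and the nine $v_{ij}$ distinct; equivalently it is a $3\times 3$ array of points whose part-pattern is a Latin square and all of whose three rows and three columns are edges. I would introduce coordinates for the nine points $v_{ij}$ and write down the six algebraic relations forced by the six edge-memberships. This produces an overdetermined system --- six constraints on nine coordinates --- and the goal is to show that, given the defining property of the construction, it admits no solution in which the nine points are distinct. I would use the symmetry of the grid (permuting rows, permuting columns, and transposing) to reduce to a normal form and then extract a contradiction from the governing geometric invariant.

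The main obstacle is the tension between density and grid-freeness, and it is genuinely the crux rather than a formality. Naive maximal systems are useless here: in the system of all triples $\{a,b,c\}\subset\mathbb{F}_q$ with $a+b+c=0$, the six grid relations are consistent and leave a positive-dimensional family of solutions, so there are $\sim q^4$ grids among only $\sim q^2$ edges, each edge lying in $\sim q^2$ of them. Consequently no thinning of such a Steiner-type system to constant density can destroy every grid --- one would have to delete essentially all the edges --- so grid-freeness must be engineered into the geometry itself, making it impossible for any affine or projective image of the $3\times 3$ grid to embed. Proving that intrinsic obstruction --- that the six edge relations are jointly unsatisfiable by nine distinct points of the construction --- is the step I expect to be hardest, with pinning the density constant at exactly $1/12$ (rather than merely $\Omega(n^2)$) via the incidence count as the secondary difficulty.
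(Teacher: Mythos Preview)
Your proposal is an outline without the two ideas that actually make the proof work, and the approach you sketch for grid-freeness is not the one that succeeds.

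First, you never commit to a construction, and the paper's construction is not one you would guess from your template. It is \emph{bipartite}: the vertex set is the union of two conics $V_1=\{(x,x^2)\}$ and $V_2=\{(x,x^2+1)\}$ in $AG(2,q)$, and every edge is a collinear triple with \emph{exactly two} points on $V_1$ and one on $V_2$. Grid-freeness is then a one-line application of Pascal's theorem rather than the brute-force ``six algebraic relations on nine coordinates'' attack you describe: in any $G_{3\times 3}$ the six $V_1$-vertices form a hexagon on a conic whose three ``opposite-side'' intersection points are the three $V_2$-vertices, so Pascal forces those three to be collinear, contradicting that $V_2$ is a conic. Your plan to exhibit an overdetermined system and extract a contradiction by symmetry reduction is not wrong in spirit, but it gives no indication of \emph{which} geometric invariant does the work; the answer is Pascal, and it only fires because of the two-conic bipartition, which your proposal does not anticipate.

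Second, and more seriously, this construction by itself only gives $(\tfrac{1}{16}-o(1))n^2$ edges (the Gishboliner--Shapira constant), not $\tfrac{1}{12}$. The improvement to $\tfrac{1}{12}$ comes from a separate optimization step that your outline has no analogue of: one keeps all of $V_1$ but only a random half of $V_2$, so the vertex count drops to $\tfrac{3}{2}q$ while a $\tfrac{3}{4}$-fraction of the secants still hit the retained half of $V_2$; the ratio $\frac{(2p-p^2)q^2/4}{(q+pq)^2}$ is maximized at $p=\tfrac12$ with value $\tfrac{1}{12}$. Your remark that ``no thinning of a Steiner-type system to constant density can destroy every grid'' is correct but beside the point here---the thinning is applied to an already grid-free system, and its purpose is to improve the edge-to-vertex-squared ratio, not to kill grids. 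Without this second idea you would be stuck at $\tfrac{1}{16}$.
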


\begin{proof}
First, we reprove the Gishboliner-Shapira bound with the constant $\frac{1}{16}$ in a different way.
Let $q$ be a prime power, let $\mathbb{F}_q$ be the ﬁnite ﬁeld of order $q$, and let $AG(2,q)$ denote the Desarguesian aﬃne plane of order $q$.

The vertex set of the hypergraph $\mathcal{H}$ consists of the points of two parabolas, $V_1=\{(x,x^2) : x\in \mathbb{F}_q\}$ and $V_2=\{(x,x^2+1) : x\in \mathbb{F}_q\}$. 
The edge set is defined by collinear triples on the lines of the secants of $V_1$ that have a non-empty intersection with $V_2$. If the line has two intersection points in $V_2$, choose only one of them to form an edge. Let's count the number of edges. 

\medskip
There are $q$ points in $V_1$, so the number of its secants is $\binom{q}{2}$. For two distinct points $(a,a^2)$ and $(b,b^2)$ the equation of the line connecting them is $y=(a+b)x-ab.$ It intersects $V_2$ iff the discriminant $(a-b)^2-4$ is a quadratic residue or zero (we assumed here that $q$ is odd, although one could perform similar calculations for even $q$ as well). It is quadratic residue or zero for $q(q-\chi(-1))/2$ ordered $(a,b)$, $a\neq b$ pairs, where $\chi$ is the quadratic character in $\mathbb{F}_q$. 
The number of intersecting secants, and therefore the number of edges is
\[
E(\mathcal{H})=\frac{q(q-\chi(-1))}{4}\approx \frac{q^2}{4}=\left(\frac{|V(\mathcal{H}|}{2}\right)^2\frac{1}{4}=\frac{1}{16}|V(\mathcal{H}|^2
\]
for large $q$. 

\medskip
This 3-uniform linear hypergraph doesn't contain $3\times 3$ grids. If there were one in $\mathcal{H}$, then six of its vertices were in $V_1$, so by Pascal's theorem\footnote{Pascal's theorem holds over finite fields (or more precisely over projective geometries \cite{Artzy1968}).}, the three vertices in $V_2$ were collinear. On the other hand, the parabola, $V_2$, has no collinear triples.

\medskip
Most of the intersecting lines of the secants of $V_1$ intersect $V_2$ in two points, and we selected only one of them. Now we select a random subset, hoping that many lines will still intersect at a point as the number of points reduces. Let's select points of $V_2$ independently at random with probability $p$ into a set $S$. The expected size of $S$ is $pq$. The number of secants of $V_1$ with two intersection points in $V_2$ is 
$\frac{q(q-\chi(-1)-4)}{4}$, since we have to discard the tangents, the cases when the discriminant is zero, i.e. when $a-b=\pm 2$. We are looking for an asymptotic bound, so we will continue counting with $\approx q^2/4$ such lines. The expected number of lines with at least one point in $S$ is $(2p-p^2)q^2/4$. Select $p$ to maximize 
\[
\frac{(2p-p^2)q^2/4}{(q+pq)^2}=\frac{2p-p^2}{4(1+p)^2}.
\]

The maximum is achieved when $p=1/2$. Then the ratio is $1/12$ as required. As the final step, we select the edges of the random subgraph of $\mathcal{H}$. If the line of the secant has four points, choose one of the two from $S$. The secants with one point in $S$ determine a unique edge. Next, we show that one can select half of the points of $V_1$ matching the edge numbers we expect.

%\section{Elementary Existence Argument for $c\ge1/12$}

\begin{lemma}
 Fix any set of pairs of $N$ elements, $H\subset\binom{[N]}{2}$ with $|H|=\tfrac12\binom N2$, and let
$k=\lfloor N/2\rfloor$.
Then there is a subset $S\subset H$ containing at least one element from $$\left\lceil\frac{2kN - k^2 - k}{4}\right\rceil\approx \frac{3N^2}{16}$$ pairs in $H$.
 
\end{lemma}

Choose $S\subset[N]$ uniformly at random from all subsets of size $k$.  Define
\[
G(S)=\{\,e\in H : S\cap e\neq\emptyset\}, 
\quad
c(S)=\frac{|G(S)|}{(N+|S|)^2},
\]
where $c(S)$ is the density of edges of the hypergraph we are looking for in Theorem \ref{main}.
There is some $S$ of size $k$ with
\[
|G(S)| \;\ge\;\E\bigl[|G|\bigr].
\]
We compute this expectation:
\[
\E\bigl[|G|\bigr]
=|H|\biggl(1-\frac{\binom{N-2}{k}}{\binom{N}{k}}\biggr)
=\frac12\binom N2\biggl(1-\frac{(N-k)(N-k-1)}{N(N-1)}\biggr)
=\frac{2kN - k^2 - k}{4}.
\]

Hence
\[
\lvert G(S)\rvert \;\ge\;\left\lceil\frac{2kN - k^2 - k}{4}\right\rceil.
\]

It follows that for some \(S\) of size \(k\),
\[
c(S)
=\frac{\lvert G(S)\rvert}{(N+k)^2}
\;\ge\;
\frac{\displaystyle \left\lceil\frac{2kN - k^2 - k}{4}\right\rceil}{(N+k)^2}=\frac{1}{12} -\delta_N
\]

where the  error term
\[
\delta_N
=\frac{\bigl\lceil\frac{2kN - k^2 - k}{4}\bigr\rceil - \frac{2kN - k^2 - k}{4}}{(N+k)^2}\leq \frac{4}{9N^2}.
\]

\end{proof}

\medskip
 
 Since we need half of the points from one of the conics in a random-like manner, we can also use an algebraic, deterministic construction. For the calculations, it is easier if we take half of $V_1$, counting triples with two points in $V_2$ and at least one in $V_1$. Set $S$ to be the points above quadratic residues, i.e. $S:=\{(a^2,a^4) | a\in \mathbb{F}_p$. Then $S$ is ``random enough'' so half of the lines of point-pairs in $S$ intersect $V_2$, like in a random subset. We left the calculations to the Appendix.

\section{Further problems}

Investigating Steiner triple systems (STS-s) of size 21 with a non-trivial automorphism group, Erskine and Griggs observed that all such STS-s contain a $3\times 3$ grid \cite{ErskineGriggs2024}. It makes it plausible that every large enough STS contains a grid, contradicting the conjecture of F\"uredi and Ruszink\'o.
We state a less ambitious conjecture.

\begin{conjecture}\label{gridcore}
Every large enough STS contains a 2-core on nine vertices, where a 2-core is a hypergraph with minimum degree 2.
\end{conjecture}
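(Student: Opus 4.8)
The plan is to first pin down the minimal targets and then hunt for them by a counting argument internal to the Steiner system.

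\textbf{Step 1 (what a minimal nine-vertex 2-core looks like).} A 2-core on nine vertices with the fewest triples must be $2$-regular: if all nine degrees are at least $2$ and there are $m$ triples, then $3m=\sum_v\deg v\ge 18$, so $m\ge 6$, and equality forces every degree to equal $2$. Dualising -- blocks become vertices, and each of the nine points becomes an edge joining the two blocks containing it -- identifies these $2$-regular configurations with the cubic \emph{simple} graphs on six vertices (simplicity is exactly linearity), of which there are precisely two: $K_{3,3}$, which is the grid $G_{3\times 3}$, and the triangular prism $K_3\,\square\,K_2$. So it suffices to find, in every large STS, a grid or a prism (a larger 2-core is equally acceptable). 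Since forcing the grid is the very F\"uredi--Ruszink\'o question that we doubt, the flexibility we exploit is the option of producing a prism instead.

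\textbf{Step 2 (reduction to a collision of ``triangles'').} Call three blocks a \emph{triangle} if they pairwise meet in three distinct points with no common point; writing them as $\{x,y,R_c\},\{y,z,R_a\},\{z,x,R_b\}$, the set $\{x,y,z\}$ consists of its \emph{intersection points} and $\{R_a,R_b,R_c\}$ of its \emph{private points}. In an STS each unordered point-triple $\{x,y,z\}$ determines at most one triangle, via the three blocks on the pairs $xy,yz,zx$; hence there are $\binom n3-O(n^2)$ triangles. The key observation is that \emph{any two distinct triangles with the same private-point set yield a nine-vertex 2-core}: the six blocks lie on the nine points $\{x,y,z\}\cup\{x',y',z'\}\cup\{R_1,R_2,R_3\}$, each intersection point lies in two blocks of its own triangle and each $R_i$ in one block of each triangle, so every degree is exactly $2$ (the degree sum is $18$); by Step~1 this configuration is automatically a grid or a prism. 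It therefore suffices to exhibit two distinct triangles sharing their private-point triple, with all nine points distinct, the degenerate coincidences being a lower-order count.

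\textbf{Step 3 (a second-moment search for the collision).} Let $\Phi$ send a triangle to its private-point triple, a map from the $\binom n3-O(n^2)$ triangles into the $\binom n3$ point-triples; a collision is a fibre of size at least two. Since
\[
\sum_t\lvert\Phi^{-1}(t)\rvert^2-\sum_t\lvert\Phi^{-1}(t)\rvert=2\sum_t\binom{\lvert\Phi^{-1}(t)\rvert}{2},
\]
it is enough to show that the number of ordered pairs of triangles with a common private set \emph{strictly} exceeds the number of triangles; the diagonal already contributes exactly the latter, so the whole conjecture reduces to producing one off-diagonal pair. The first moment predicts it: the mean fibre size is $\Theta(1)$, so any genuinely spread (Poisson-like) distribution of fibre sizes produces many fibres of size $\ge 2$. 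The plan is to estimate the second moment directly -- choose the three private points and then two triangles on them -- and to show this count is asymptotically larger than $\binom n3$.

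\textbf{The main obstacle.} The counting sits exactly at the birthday threshold: there are $\approx\binom n3$ triangles and $\approx\binom n3$ possible private sets, so bare pigeonhole gives nothing and the first moment alone does not force a collision; a hypothetical STS whose private-point map were a bijection would evade the argument entirely. The crux is thus a \emph{lower} bound on $\sum_t\lvert\Phi^{-1}(t)\rvert^2$ valid for \emph{every} sufficiently large STS rather than for a random one, and ruling out near-injective behaviour for arbitrary, possibly highly structured systems (anti-Pasch families, difference sets, and the like) is where I expect essentially all of the difficulty -- and the reason the statement remains a conjecture -- to lie.
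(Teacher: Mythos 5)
This statement is a \emph{conjecture} in the paper -- no proof is given there -- so your proposal can only be judged on its own terms, and on those terms it is not a proof: your own ``main obstacle'' paragraph concedes that Step 3 is open. But the situation is worse than ``hard'': the obstacle is insurmountable, and the paper itself contains the reason. Your Steps 1--2 are correct (the degree-sum and dualization argument showing that the minimal nine-vertex 2-cores are exactly $K_{3,3}$, i.e.\ the grid, and the prism $K_3\,\square\,K_2$ matches the paper's own remark in Section 3), but observe what the collision configuration of Step 2 actually dualizes to: the three blocks of each triangle pairwise meet, so they form a dual triangle, and the three shared private points $R_1,R_2,R_3$ form a perfect matching between the two dual triangles. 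Two triangles plus a perfect matching is precisely the prism -- and it is \emph{never} $K_{3,3}$, since $K_{3,3}$ is triangle-free. So the ``grid or prism'' flexibility you invoke in Step 1 evaporates in Step 2: a collision of $\Phi$ produces only prisms, and conversely every prism in an STS arises as such a collision. Your program is therefore exactly the statement ``every large STS contains a prism.''

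That statement is false. The paper notes explicitly (Section 3, citing \cite{Colbourn2009}) that the prism can be avoided in arbitrarily large Steiner triple systems. In any such anti-prism STS, the private-point map $\Phi$ is injective on nondegenerate triangles -- the ``hypothetical STS whose private-point map were a bijection'' that you worried might ``evade the argument entirely'' is not hypothetical; known constructions realize it (up to the degenerate lower-order fibres). Consequently no second-moment lower bound of the kind you seek can hold for all large STSs, and no counting refinement within this framework can rescue the approach. Any genuine attack on the conjecture must be able to produce grids (or nine-vertex cores with more than six blocks) in anti-prism systems, which is essentially the F\"uredi--Ruszink\'o difficulty you hoped to sidestep. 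A secondary, smaller issue: even where collisions exist, producing a single off-diagonal pair does not suffice, since that one pair could be degenerate (shared block, or coinciding points giving an eight-vertex configuration); dismissing degeneracies as ``lower-order'' requires a surplus of collisions, not merely one.
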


It was observed by Colbourn and Fujiwara that every STS contains a core on at most ten vertices (Theorem 3. in \cite{Colbourn2009})

In a closely related problem, a stronger conjecture was stated in \cite{solymosi2017cores}: Every large enough 3-uniform linear hypergraph with $n$ vertices and $cn^2$ edges contains a core on at most nine vertices. This conjecture was motivated by the case $k = 6$ of the Brown-Erd\H{o}s-S\'os conjecture, since a 2-core on nine vertices has at least six edges\footnote{The Brown-Erd\H{o}s-S\'os conjecture states that if in a 3-uniform hypergraph no six vertices span at least nine edges (like any core on at most nine vertices), then it is sparse, it has $o(n^2)$ edges \cite{brown1973some}}. It was also investigated in \cite{Gishboliner2022} whether all dense linear hypergraphs contain a core on at most nine vertices.

There are two 2-core hypergraphs on nine vertices, which are subgraphs of any other core on nine vertices: the grid and another graph called the prism (or double triangle), which can be avoided in arbitrarily large Steiner triple systems \cite{Colbourn2009}.

\begin{figure}[h]
  \centering
\begin{tikzpicture}[scale=1.5, every node/.style={circle, draw, fill=white, inner sep=1pt}]
  % Grid points
  \foreach \i in {0,1,2}
    \foreach \j in {0,1,2}
      \node (a\i\j) at (\j,-\i) {};

  % Vertex labels (d, e, f shifted slightly right)
  \node at (0, 0.3) {a};
  \node at (1, 0.3) {b};
  \node at (2, 0.3) {c};
  \node at (-0.2, -1.1) {d};
  \node at (1.2, -0.9) {e};
  \node at (2.2, -1.1) {f};
  \node at (0, -2.3) {g};
  \node at (1, -2.3) {h};
  \node at (2, -2.3) {i};

  % H_2 edges
  \draw (a00) -- (a01) -- (a02); % a b c
  \draw (a00) -- (a10) -- (a20); % a d g
  \draw (a02) -- (a12) -- (a22); % c f i
  \draw (a20) -- (a21) -- (a22); % g h i

  % Curved edges involving e
  \draw[bend right=20] (a01) to (a11);
  \draw[bend right=20] (a11) to (a12);

  \draw[bend left=20] (a10) to (a11);
  \draw[bend left=20] (a11) to (a21);

\end{tikzpicture}
  \caption{The prism or double triangle.}
  \label{fig:prism}
\end{figure}

\section{Acknowledgements}
The research was supported in part by an NSERC Discovery grant and by the National Research Development and Innovation Office of Hungary, NKFIH, Grant No. KKP133819 and Excellence 151341.

\section{Appendix}

Let $S = \{(a^2,a^4)\colon a \in \mathbb{F}_p\},$ and count the number of distinct lines through pairs of points of $S$ that intersect $V_2$. The number of such lines is denoted by $N=N(p)$. $N_{two}$ and $N_{one}$ denote the lines with two and with one intersection point with $V_2$.

\medskip

Choose two distinct points
\[
P=(a^2,a^4),\quad Q=(b^2,b^4),\quad a,b\in\mathbb{F}_p,\ a\neq b.
\]

The slope of the line $PQ$ is
\[
 m = \frac{a^4 - b^4}{a^2 - b^2}
     = a^2 + b^2.
\]

Plugging $(x,y)=(a^2,a^4)$ into $y = m x + c$ gives
\[
 a^4 = (a^2+b^2)a^2 + c \,\Longrightarrow\, c = -a^2b^2,
\]

and
\[
 \ell_{a,b}:\quad y = (a^2+b^2)x - a^2b^2.
\]

The intersection points with $V_2$ satisfy
\[
(a^2+b^2)x - a^2b^2 = x^2 + 1,
\]

The discriminant is
\[
 \Delta_{a,b} = (a^2+b^2)^2 - 4(1 + a^2 b^2)
               = (a^2-b^2)^2 - 4
               = ((a-b)(a+b))^2 - 2^2.
\]

Write $u=a-b$, $v=a+b$, so
\[
 \Delta_{a,b} = (uv - 2)(uv + 2).
\]
If $\Delta_{a,b}$ is a quadratic residue in $\mathbb{F}_p$ then the line meets $V_2$ in two distinct points and $\Delta_{a,b}=0$ gives a tangent.

Let $\chi\colon\mathbb{F}_p^\times\to\{\pm1\}$ be the quadratic character, extended by $\chi(0)=0$.  A secant line $\ell_{a,b}$ with $\Delta\neq0$ meets $V_2$ in two points exactly when $\chi(\Delta_{a,b})=+1$.  Each pair $(a,b)$ and $(b,a)$ give the same line.  
\[
 N_{\rm two}(p)
 = \frac12\#\{(a,b)\colon a\neq b,\ \chi(\Delta_{a,b})=+1\}
 = \frac12\sum_{a\neq b}\frac{1+\chi(\Delta_{a,b})}{2}-\frac14 N_{one}.
\]

To evaluate the sum above, let's check
\[
 \sum_{a\neq b}\chi(\Delta_{a,b})
 = \sum_{u\neq0}\sum_{v\in\mathbb{F}_p}\chi(u^2v^2 - 4)
 = (p-1)\sum_{x\in\mathbb{F}_p}\chi(x^2 - 4),
\]
because $x=uv$ runs over all $\mathbb{F}_p$ when $v$ does.  A classical Gauss‐sum evaluation shows $\sum_x\chi(x^2-4)=-1$.  Thus
\[
 \sum_{a\neq b}\chi(\Delta_{a,b}) = -(p-1),
\]
and
\[
 N_{\rm two}(p) = \tfrac12\cdot\tfrac{(p-1)-(p-1)}{2} = 0,
\]
before tangents are added.

\subsection*{4. Tangent secants $\Delta=0$}
Tangency occurs when $\Delta_{a,b}=0$, i.e.\ $uv=\pm2$.  We count distinct lines:

-- Solve $uv=\alpha\neq0$: for each fixed $\alpha$, there are exactly $p-1$ ordered $(u,v)$ with $uv=\alpha$, hence $p-1$ unordered.  However $(u,v)$ and $(-u,-v)$ give the same $\{a,b\}$, so each unordered $\{u,v\}$ yields one line.  Thus for each $\alpha\in\{\pm2\}$ we get 1 tangent line precisely when $\alpha$ is a square in $\mathbb{F}_p$.

By quadratic reciprocity,
\[
 \chi(2)=+1 \iff p\equiv\pm1\pmod8,
 \quad
 \chi(-2)=+1 \iff p\equiv1,3\pmod8.
\]
Hence:
\[
 \#\{\text{tangent lines}\}
 = [\chi(2)=+1] + [\chi(-2)=+1]
 = \begin{cases}
 0, & p\equiv3\pmod4,\\
 2, & p\equiv1\pmod4.
 \end{cases}
\]

\subsection*{5. Final step}
Putting the two‐intersection and tangent counts together, the total number of distinct secant lines meeting $C_2$ is
\[
 N(p) = N_{\rm two}(p) + \#\{\text{tangent lines}\}
       = 0 + \begin{cases}0,& p\equiv3\pmod4,\\2,& p\equiv1\pmod4,\end{cases}
\]
i.e.
\[
 N(p)=
\begin{cases}
\displaystyle\frac{(p+1)^2}{16},&p\equiv3\pmod4,\\[0.5em]
\displaystyle\frac{(p-1)^2}{16}+2,&p\equiv1\pmod4.
\end{cases}
\]

\medskip

\end{document}